\newtheorem{theorem}{Theorem}[section]
\newtheorem{example}{Example}[section]
\newtheorem{definition}{Definition}[section]
\numberwithin{equation}{section}
\newcommand{\abs}[1]{\left | #1\right |}
\begin{document}
\title{On Double Sequences }
\author{Richard F. Patterson* \& Huseyin Cakalli**\\*University of North Florida Jacksonville, Florida, USA\\**Maltepe University, Istanbul, Turkey}
\address{Department of Mathematics and Statistics,
University of North Florida, Jacksonville, Florida, 32224}
\email{rpatters@unf.edu}
\address{H\"Usey\.{I}n \c{C}akall\i\\
          Department of Mathematics, Maltepe University, Marmara E\u{g}\.{I}t\.{I}m K\"oy\"u, TR 34857, Maltepe, \.{I}stanbul-Turkey \; \; \; \; \; Phone:(+90216)6261050 ext:2248, fax:(+90216)6261113}
\email{huseyincakalli@maltepe.edu.tr ; hcakalli@gmail.com}
\subjclass[2010]{Primary 40B05, Secondary 40C05}

\date{\today}

\keywords{Double Sequences, $P$-convergent, continuity }

\maketitle
\begin{abstract}
A double sequence $\{x_{k,l}\}$ is quasi-Cauchy if given an $\epsilon > 0$ there exists an $N \in {\bf N}$ such that
$$\max_{r,s= 1\mbox{ and/or } 0} \left \{|x_{k,l} - x_{k+r,l+s}|< \epsilon\right \} .$$ We study continuity type properties of factorable double functions defined on a double subset $A\times A$ of ${\bf R}^{2}$ into $\textbf{R}$, and obtain interesting results related to uniform continuity, sequential continuity, continuity, and a newly introduced type of continuity of factorable double functions defined on a double subset $A\times A$ of ${\bf R}^{2}$ into $\textbf{R}$.
\end{abstract}
\section{\bf Introduction}
In 1900, Pringsheim (\cite{Pring1}) introduced the concept of convergence of real double sequences. Four years later, Hardy (\cite{HardyOntheconvergenceofcertainmultipleseries}) introduced the notion of regular convergence for double sequences in the sense that double sequence has a limit in Pringsheim's sense and has one sided limits (see also \cite{robison, Ham1}).
A considerable number of papers which appeared in recent years study double sequences from various points of view (see \cite{AlotaibiandursaleenandAlghamdi,DuttaACharacterizationoftheClassofStatisticallyPre-CauchyDoubleSequencesofFuzzyNumbers,MursaleenandMohiuddineandSyed, PattersonandSavasAsymptoticEquivalenceofDoubleSequences, PattersonRH-regularTransformationsWhichSumsaGivenDoubleSequence, DjurcicandKocinacandZizovic, PattersonAtheoremonentirefourdimensionalsummabilitymethods, PattersonFourdimensionalmatrixcharacterizationP-convergencefieldsofsummabilitymethods}). Some results in the investigation are generalizations of known results concerning simple sequences to certain classes of double sequences, while other results reflect a specific nature of the Pringsheim convergence (e.g., the fact that a double sequence may converge without being bounded).

The aim of this paper is to introduce quasi-Cauchy double sequences, and investigate newly defined types of continuities for factorable double functions.

\section{\bf Priliminaries}
\begin{definition} (\cite{Pring1}) A double sequence ${\bf x}=\{x_{k,l}\}$ is Cauchy provided that, given an $\epsilon > 0$ there exists an $N \in {\bf N}$ such that $|x_{k,l} - x_{s,t}| < \epsilon$ whenever $k,l,s,t > N$.
\end {definition}

\begin{definition}
(\cite{Pring1}) A double sequence ${\bf x}=\{x_{k,l}\}$
has a {\bf Pringsheim limit} $L$ (denoted by P-$\lim x=L$) provided that, given an $\epsilon > 0$ there
exists an $N \in {\bf N}$ such that $\left|x_{k,l} - L\right| < \epsilon$
whenever $k,l > N$.  Such an $\bf x$ is described more briefly as ``$P$-convergent''.
\end{definition}
If $\lim |\textbf{x}| = \infty$ , (equivalently, for every $\varepsilon > 0$  there are $n_{1}, n_{2}\in{N}$ such that $| x_{m,n} |>M$  whenever $m>n_{1}$, $n>n_{2}$), then $\textbf{x}=\{x_{m,n}\}$ is said to be definitely divergent. A double sequence $\textbf{x}=\{x_{m,n}\}$ is bounded if there is an $M>0$  such that $|x_{m,n} |< M$ for all $m, n \in{\textbf{N}}$. Notice that a $P$-convergent double sequence need not be bounded.

\begin{definition}
(\cite{rfp}) A double sequence $\bf {y}$ is a {\bf double subsequence} of $\bf x$
provided that there exist increasing index sequences $\{n_{j}\}$
and $\{k_{j}\}$ such that, if $\{x_{j}\} = \{x_{n_{j},k_{j}}\}$, then $\bf y$ is
formed by
\[ \begin{array}{cccc}
  x_{1}&x_{2}& x_{5} &x_{10}\\
   x_{4}&x_{3}&x_{6}&-\\
 x_{9}&x_{8}&x_{7}&- \\
 - &-&-&-. \\
        \end{array}\]
\end{definition}

\section{{\bf Main Results}}

\begin{definition} \label{DefquasiCauchydoublesequence}
A double sequence $\textbf{x}=\{x_{k,l}\}$
is called quasi-Cauchy if given an $\epsilon > 0$ there
exists an $N \in {\bf N}$ such that $$\max_{r,s= 1\mbox{ and/or } 0} \left \{|x_{k,l} - x_{k+r,l+s}|< \epsilon\right \} $$
whenever $k,l > N$.
\end{definition}
Any $P$-convergent double sequence is quasi-Cauchy. Any Cauchy double sequence is quasi-Cauchy. Any subsequence of a $P$-convergent double sequence is $P$-convergent. Any subsequence of a Cauchy double sequence is Cauchy. But situation is different for quasi-Cauchy double sequences. There are subsequences of a quasi-Cauchy double sequence which are not quasi-Cauchy.

\begin{example} Double sequence defined by
let $s_{n}=\sum_{k=1}^{n}\frac{1}{k}$
\[ \begin{array}{ccccc}
s_{1}& s_{2}& s_{3}&s_{4}&\cdots\\
 s_{2}& s_{2}& s_{3}&s_{4}&\cdots\\
 s_{3}& s_{3}& s_{3}&s_{4}&\cdots\\
 s_{4}& s_{4}& s_{4}&s_{4}&\cdots\\
 \vdots& \vdots& \vdots&\vdots&\ddots\\
         \end{array}\]
is not $P$-convergent nor Cauchy, however it is a quasi-Cauchy double sequence In addition this double sequence has subsequences that are not quasi-Cauchy.
\end{example}

\begin{definition} \label{Defdoublesequentialcontinuity}
A factorable double function $f$ defined on a double subset $A\times A$ of ${\bf R}^{2}$ into $\textbf{R}$ is called double sequentially continuous at a point $L$ of  $A\times A$ if $f(\textbf{x})$ is $P$-convergent to $f(L)$ whenever $\textbf{x}=\{x_{k,l}\}$ is a $P$-convergent double sequence of points in $A\times A$ with P-limit $L$. If $f$ is double sequentially continuous at every point of  $A\times A$, we say $f$ is double sequentially continuous on $A\times A$.
\end{definition}

We note that any continuous function at a point $L$ of  $A\times A$ is also double sequentially continuous at $L$. The converse is also true:
\begin{theorem} \label{Theodoublesequentialcontinuityimpliescontinuity}
If a factorable double function $f$ defined on a double subset $A\times A$ of ${\bf R}^{2}$ is double sequentially continuous at $L$, then it is continuous.
\end{theorem}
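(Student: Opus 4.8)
The plan is to prove the contrapositive: assuming $f$ is not continuous at $L$, I will construct a $P$-convergent double sequence witnessing the failure of double sequential continuity, thereby contradicting the hypothesis. This is the standard route for ``sequential continuity implies continuity'' arguments, and the double-sequence setting does not change the overall strategy, only the bookkeeping.

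First I would unwind the negation of continuity at $L=(L_1,L_2)$. Writing $f$ as a factorable double function, there is an $\epsilon_0 > 0$ such that for every $\delta > 0$ one can find a point of $A\times A$ within distance $\delta$ of $L$ whose image lies at distance at least $\epsilon_0$ from $f(L)$. The idea then is to take a sequence of shrinking tolerances, say $\delta = 1/j$ for $j \in {\bf N}$, and for each $j$ select such a bad point. The main care is in \emph{indexing}: a single sequence of bad points indexed by $j$ must be arranged into a genuine double array $\{x_{k,l}\}$ that is $P$-convergent to $L$. I would do this by placing the $j$-th chosen point in an appropriate block of the array (for instance along the $j$-th antidiagonal or in the $j$-th square shell $\max\{k,l\}=j$) and filling remaining entries so that the whole array stays within distance tending to $0$ of $L$. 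Because $P$-convergence only requires $|x_{k,l}-L| < \epsilon$ once both $k,l > N$, it suffices that the distances go to $0$ as $\max\{k,l\}\to\infty$, which the construction guarantees.

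Once the double sequence $\{x_{k,l}\}$ is built, I would verify that P-$\lim x = L$ directly from the construction, so the hypothesis of double sequential continuity applies and forces $f(\{x_{k,l}\})$ to be $P$-convergent to $f(L)$. But by construction infinitely many entries of $\{x_{k,l}\}$ are bad points, so their images stay at distance at least $\epsilon_0$ from $f(L)$; since these bad entries occur for arbitrarily large indices $k,l$, the image array cannot be $P$-convergent to $f(L)$. This contradiction establishes continuity.

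The step I expect to be the genuine obstacle is precisely the arrangement of the one-parameter list of bad points into a two-dimensional array that is $P$-convergent rather than merely convergent in some weaker one-sided or diagonal sense. In single-sequence arguments this filling is trivial, but here I must ensure that every entry with large $\min\{k,l\}$ (or large $\max\{k,l\}$, depending on how I set up the shells) is close to $L$, while still embedding infinitely many bad points at arbitrarily high index positions. Choosing the shell structure $\max\{k,l\}=j$ and using the fact that $P$-convergence is governed by both indices exceeding $N$ simultaneously resolves this, but the verification that the bad images obstruct $P$-convergence of $f(\{x_{k,l}\})$ must be stated with care so that ``infinitely many bad entries at large indices'' is correctly matched against the Pringsheim definition.
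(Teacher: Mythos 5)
Your proposal is correct and follows essentially the same route as the paper's own proof: negate continuity at $L$, extract a bad point within $\delta=1/j$ of $L$ for each $j$, assemble these into a $P$-convergent double sequence with $P$-limit $L$ whose image fails to be $P$-convergent to $f(L)$, contradicting double sequential continuity. The only difference is that the paper simply asserts the double sequence is ``not difficult to construct,'' whereas you spell out the indexing (placing the $j$-th bad point on the shell $\max\{k,l\}=j$ and filling the rest near $L$), so your version is, if anything, more complete on the one step the paper leaves implicit.
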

\begin{proof} Suppose that $f$ is not continuous at $L$. Then there is an $\varepsilon_{0}>0$ such that for any $\delta > 0$ there exist an ${\bf x}_{\delta}$ so that
$|x_{1}(\delta)-L_{1}|<\delta$ and $|x_{2}(\delta)-L_{2}|<\delta$ but $|f(x_{1}(\delta), x_{2}(\delta))-f(L)|\geq \varepsilon_{0}$.
It is not difficult to construct a convergent double sequence with limit $L$ whose transformed sequence is not convergent to $f(L)$. Thus $f$ is not double sequentially continuous at $L$. This contradiction completes the proof of the theorem.
\end{proof}

Now we have obtained that a factorable double function $f$ defined on a double subset $A\times A$ of $R^{2}$ is double sequentially continuous at a point $L$ if and only if it is continuous. Using this equivalence we prove the following theorem.

\begin{theorem}
If a factorable double function $f$ defined on a double subset $A\times A$ of ${\bf R}^{2}$ preserves factorable double quasi-Cauchy sequences from $A\times A$, then it is continuous.
\end{theorem}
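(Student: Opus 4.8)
The plan is to argue by contradiction, in the spirit of the proof of Theorem~\ref{Theodoublesequentialcontinuityimpliescontinuity}, and to exploit the equivalence between continuity and double sequential continuity just recorded. Suppose $f$ fails to be continuous at some point $L=(L_{1},L_{2})$ of $A\times A$. Then there is an $\varepsilon_{0}>0$ such that for every $\delta=\tfrac{1}{n}$ one may choose a pair $(p_{n},q_{n})\in A\times A$ with $|p_{n}-L_{1}|<\tfrac{1}{n}$ and $|q_{n}-L_{2}|<\tfrac{1}{n}$, yet $|f(p_{n},q_{n})-f(L_{1},L_{2})|\geq\varepsilon_{0}$. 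These pairs are the ingredients I would feed into a factorable double quasi-Cauchy sequence on which $f$ misbehaves.

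Next I would build the two single sequences that factor the offending double sequence. Put
$$\mathbf{u}=p_{1},L_{1},p_{2},L_{1},p_{3},L_{1},\dots,\qquad \mathbf{v}=q_{1},L_{2},q_{2},L_{2},q_{3},L_{2},\dots,$$
so that $u_{2n-1}=p_{n}$, $u_{2n}=L_{1}$, $v_{2n-1}=q_{n}$, $v_{2n}=L_{2}$, and let $x_{k,l}=(u_{k},v_{l})$ be the associated factorable double sequence in $A\times A$. Since $|p_{n}-L_{1}|\to 0$ and $|q_{n}-L_{2}|\to 0$, consecutive terms of $\mathbf{u}$ and of $\mathbf{v}$ differ by amounts tending to $0$; hence each of the neighbouring differences $x_{k,l}-x_{k+r,l+s}$ with $(r,s)\in\{(1,0),(0,1),(1,1)\}$ tends to $(0,0)$ coordinatewise, and $\{x_{k,l}\}$ is a factorable double quasi-Cauchy sequence in the sense of Definition~\ref{DefquasiCauchydoublesequence}.

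The punchline I would then isolate is the diagonal comparison. At every position $(k,l)=(2n-1,2n-1)$ the entry is $x_{k,l}=(p_{n},q_{n})$ while its diagonal neighbour is $x_{k+1,l+1}=(L_{1},L_{2})$, so that
$$|f(x_{k,l})-f(x_{k+1,l+1})|=|f(p_{n},q_{n})-f(L_{1},L_{2})|\geq\varepsilon_{0}.$$
Because such positions occur for arbitrarily large $n$, the transformed double sequence $\{f(x_{k,l})\}$ violates the quasi-Cauchy condition for the fixed $\varepsilon_{0}$. Thus $f$ does not preserve the factorable double quasi-Cauchy sequence $\{x_{k,l}\}$, contradicting the hypothesis; this forces continuity of $f$ at $L$, and hence everywhere on $A\times A$.

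I expect the only genuine obstacle to be the bookkeeping that keeps the construction legitimately \emph{factorable}: the gap $|f(p_{n},q_{n})-f(L_{1},L_{2})|\geq\varepsilon_{0}$ supplied by discontinuity lives in both coordinates simultaneously, so it cannot be realized by a horizontal or vertical step alone. The device of interleaving $\mathbf{u}$ (respectively $\mathbf{v}$) with copies of $L_{1}$ (respectively $L_{2}$) \emph{in phase} is exactly what parks the bad pair $(p_{n},q_{n})$ and the reference point $(L_{1},L_{2})$ at diagonally adjacent nodes of a product array, converting a two-variable discontinuity into a single diagonal failure of the quasi-Cauchy property. Verifying quasi-Cauchyness of the product array and the infinitude of offending diagonal positions is then routine.
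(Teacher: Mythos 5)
Your argument is correct, and it shares the paper's one essential device: interleaving copies of the limit point $L$ into the array so that a pair witnessing the discontinuity and the point $L$ itself land at adjacent positions, where the quasi-Cauchy condition can see them. The difference is one of logical packaging. The paper argues in the forward direction: it takes an arbitrary $P$-convergent factorable double sequence $\{a_{i,j}\}$ with limit $L$, interleaves $L$ throughout, notes that the enlarged array is still $P$-convergent (hence quasi-Cauchy), applies the preservation hypothesis, reads off from the quasi-Cauchyness of the image that $\{f(a_{i,j})\}$ is $P$-convergent to $f(L)$, and then invokes Theorem \ref{Theodoublesequentialcontinuityimpliescontinuity} to pass from double sequential continuity to continuity. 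You instead argue by contradiction directly from a hypothetical discontinuity, which makes the proof self-contained (no appeal to Theorem \ref{Theodoublesequentialcontinuityimpliescontinuity} is needed) and, more importantly, makes the factorability of the constructed array transparent: your $x_{k,l}=(u_{k},v_{l})$ is literally a product of two single sequences with consecutive differences tending to zero, whereas the paper's interleaved display is an ad hoc row/column insertion whose factorability is asserted rather than exhibited. Both routes are sound; yours buys a cleaner verification of exactly the hypotheses used (factorable, quasi-Cauchy, image failing the quasi-Cauchy condition along the diagonal positions $(2n-1,2n-1)$), while the paper's buys the slightly stronger intermediate conclusion that $f$ carries $P$-convergent sequences to $P$-convergent sequences.
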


\begin{proof} Suppose that $f$ preserves factorable double quasi-Cauchy sequences from $A\times A$. Let $\boldsymbol{\alpha}=\{a_{i,j}\}$ be a double sequence defined by  \[ \begin{array}{ccccccc}
a_{1,1}& a_{1,2} & a_{1,3} &\cdots\\
      a_{2,1}& a_{2,2} & a_{2,3} &\cdots\\
     a_{3,1}& a_{3,2} &a_{3,3}&\cdots\\
    \vdots&\vdots& \vdots&\vdots& \vdots&\vdots&\ddots
         \end{array}\]
be any $P$-convergent factorable double sequence with P-limit $L$. Then the sequence \[ \begin{array}{ccccccc}
a_{1,1}\;L\;& a_{1,2}\;L\;& a_{1,3}\;L\; &...\\
L&L& L &L& L &L&...\\
     a_{2,1}\;L\;& a_{2,2}\;L\;& a_{2,3}\;L\; &\cdots\\
   L&L& L &L& L &L&...\\
   a_{3,1}\;L\;& a_{3,2}\;L\;& a_{3,3}\;L\;&\cdots\\
  L&L& L &L& L &L&...\\
    \vdots&\vdots& \vdots&\vdots& \vdots&\vdots&\ddots
         \end{array}\]
is also $P$-convergent with $P$-limit $L$. Since any convergent double sequence is quasi-Cauchy this sequence is quasi-Cauchy. So the transformed sequence  $f(\boldsymbol{\alpha})=\{f(a_{i,j})\}$ of the sequence $\boldsymbol{\alpha}$ is quasi-Cauchy. Thus it follows that

\[ \begin{array}{ccccccc}
f(a_{1,1})\;f(L)\;& f(a_{1,2})\;f(L)\;& f(a_{1,3})\;f(L)\; &...\\
f(L)&f(L)& f(L) &f(L)& f(L) &f(L)&...\\
     f(a_{2,1})\;f(L)\;& f(a_{2,2})\;f(L)\;& f(a_{2,3})\;f(L)\; &\cdots\\
   f(L)&f(L)& f(L) &f(L)& f(L) &f(L)&...\\
   f(a_{3,1})\;f(L)\;& f(a_{3,2})\;f(L)\;& f(a_{3,3})\;f(L)\;&\cdots\\
  f(L)&f(L)&f(L) &L& f(L) &f(L)&...\\
    \vdots&\vdots& \vdots&\vdots& \vdots&\vdots&\ddots
         \end{array}\]
is factorable quasi-Cauchy double sequence. Now it follows that $\{f(a_{i,j})\}$ is a $P$-convergent factorable double sequence with $P$-limit $f(L)$. By Theorem \ref{Theodoublesequentialcontinuityimpliescontinuity}, we get that the function $f$ is continuous. This completes the proof of the theorem.
\end{proof}

\begin{theorem} \label{Theodoubleseqofpairstendigtizeroimpliesdoubleforward}
Suppose that $I\times I$ is a two dimensional interval and
\[ \begin{array}{ccccccc}
a_{1,1}&b_{1,1}& a_{1,2} &b_{1,2}& a_{1,3} &b_{1,3}&\cdots\\
   d_{1,1}&c_{1,1}&d_{1,2}&c_{1,2}&d_{1,3}&c_{1,3}&\cdots\\
   a_{2,1}&b_{2,1}& a_{2,2} &b_{2,2}& a_{2,3} &b_{2,3}&\cdots\\
   d_{2,1}&c_{2,1}&d_{2,2}&c_{2,2}&d_{2,3}&c_{2,3}&\cdots\\
  a_{3,1}&b_{3,1}& a_{3,2} &b_{3,2}& a_{3,3} &b_{3,3}&\cdots\\
   d_{3,1}&c_{3,1}&d_{3,2}&c_{3,2}&d_{3,3}&c_{1,3}&\cdots\\
   \vdots&\vdots& \vdots&\vdots& \vdots&\vdots&\ddots
         \end{array}\]
 is a double sequence of ordered pairs in $I\times I$ with
$$\lim_{i}\abs{a_{i,i} -b_{i,i}}=\lim_{i}\abs{a_{i,i}-c_{i,i}}=\lim_{i}\abs{a_{i,i}-d_{i,i}}=0.$$  Then there exists a double quasi-Cauchy                                                                                      sequence $\{x_{i,j}\}$ with the property that for any ordered pair of integers $(i,j)$; $i,j>1$ there exists an ordered pair $(\bar{i},\bar{j})$;
$\bar{i},\bar{j}>1$ such that
$$(a_{i,j},b_{i,j})=(x_{\bar{i},\bar{j}},x_{\bar{i},\bar{j}+1})$$
$$(a_{i,j},c_{i,j})=(x_{\bar{i},\bar{j}},x_{\bar{i}+1,\bar{j}+1})$$
and
$$(a_{i,j},d_{i,j})=(x_{\bar{i},\bar{j}},x_{\bar{i}+1,\bar{j}}).$$
\end{theorem}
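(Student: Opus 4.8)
The plan is to realize the prescribed array as a system of $2\times2$ sub-blocks sitting inside the sequence $\{x_{i,j}\}$, so that the three required identities hold by construction, and then to pad the remaining positions with values interpolated from $I$ so that the whole array becomes quasi-Cauchy in the sense of Definition~\ref{DefquasiCauchydoublesequence}. The naive first attempt is to set $x_{2i-1,2j-1}=a_{i,j}$, $x_{2i-1,2j}=b_{i,j}$, $x_{2i,2j-1}=d_{i,j}$, $x_{2i,2j}=c_{i,j}$; then, with $\bar{i}=2i-1$ and $\bar{j}=2j-1$, the pairs $(a_{i,j},b_{i,j})$, $(a_{i,j},c_{i,j})$, $(a_{i,j},d_{i,j})$ occupy exactly the horizontal, diagonal, and vertical neighbours demanded in the statement. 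This observation already secures the three identities, so the whole difficulty is pushed into deciding where each $2\times2$ block is placed and how the filler is chosen, so that the consecutive horizontal, vertical, and diagonal differences vanish in Pringsheim's sense.

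First I would exploit the fact that $I$ is an interval: any two of its points can be joined by a finite chain of points of $I$ with arbitrarily small successive gaps, which lets me interpolate spacer rows and columns between prescribed entries. Retaining the local $2\times2$ pattern above but choosing freely where each block sits, I would lay the diagonal blocks $(i,i)$ — whose three differences tend to $0$ by hypothesis — deep along the main diagonal of the array, sending $i\to\infty$ into the depth, and interpolate ever more finely between successive diagonal blocks so the gaps in the deep region shrink to $0$. The off-diagonal blocks $(i,j)$, $i\neq j$, carry no control on their internal differences, so I would enumerate them and park them all inside a band of bounded row index, say rows $2$ and $3$, spread across the columns; this keeps every such block permanently inside the region $\{k\le N\}\cup\{l\le N\}$ that the quasi-Cauchy condition never tests, while still giving $\bar{i}=2>1$ and $\bar{j}>1$ as the statement requires.

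To verify quasi-Cauchy, fix $\epsilon>0$ and choose $N$ past the quarantine band and large enough that throughout $\{k>N,\ l>N\}$ the only entries are tail diagonal-block data and interpolation filler. The tail diagonal blocks have all three differences below $\epsilon$ by hypothesis, the filler has successive gaps below $\epsilon$ by construction, and every uncontrolled off-diagonal block satisfies $\min(\bar{i},\bar{j})\le N$ and is therefore exempt; hence $\max_{r,s}|x_{k,l}-x_{k+r,l+s}|<\epsilon$ for all $k,l>N$, so $\{x_{i,j}\}$ is quasi-Cauchy. Since each block is reproduced as a $2\times2$ sub-block with both indices exceeding $1$, the three displayed identities hold for every $(i,j)$ with $i,j>1$, which finishes the construction.

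I expect the main obstacle to be reconciling two competing demands: the statement forces every block to reappear, including the off-diagonal ones whose internal differences need not vanish, whereas quasi-Cauchy-ness forces all deep consecutive differences to vanish simultaneously in the horizontal, vertical, and diagonal directions. The resolution — keeping the hypothesis-controlled diagonal blocks deep while quarantining the wild off-diagonal blocks in a fixed low-index band that Pringsheim convergence never sees, and then producing a single globally coherent fine interpolation that fills the whole two-dimensional deep region with values of $I$ varying slowly in all three directions at once — is the delicate part, and it is this piece of the argument that would need to be written out with genuine care.
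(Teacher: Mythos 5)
Your construction is sound and shares its engine with the paper's proof: both exploit the fact that $I$ is an interval to interpolate between the prescribed $2\times 2$ blocks by finite chains of points of $I$ whose successive horizontal, vertical and diagonal gaps shrink as one moves deeper into the array (the paper does this with auxiliary finite arrays $y^{k,l}_{i,j}$ all of whose consecutive differences are required to be below $1/(kl)$, glued together into one large double array). Where you genuinely depart from the paper is in the treatment of the off-diagonal blocks $(i,j)$, $i\neq j$, whose internal differences the hypothesis does not control. The paper handles every block uniformly, embedding $a_{k+1,l+1},b_{k+1,l+1},c_{k+1,l+1},d_{k+1,l+1}$ as adjacent boundary entries of an array with all consecutive differences less than $1/(kl)$; for an off-diagonal block this silently forces $\abs{a_{k+1,l+1}-b_{k+1,l+1}}<1/(kl)$, which the stated hypothesis supplies only along the diagonal (and indeed the paper's displayed assembled array exhibits only the blocks $(1,1),(2,2),\dots$, so the conclusion for $i\neq j$ is not visibly delivered there). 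Your quarantine device --- parking every uncontrolled block at a fixed small row index $\bar i=2$, which the Pringsheim-type condition ``for all $k,l>N$'' never tests, while still meeting the requirement $\bar i,\bar j>1$ --- resolves exactly this point; it is what makes the theorem true as literally stated rather than only under the stronger assumption (which does hold in the application to Theorem \ref{TheouniformcontinuitycoincideswithquasiCauchypreservence}) that all blocks, not just the diagonal ones, have vanishing internal differences, and it does not damage that application, since the diagonal blocks still go to infinity in both indices. The one step you defer, the globally coherent interpolation of the deep region, is routine (for instance a constant background value with finite, pairwise disjoint square patches around the diagonal blocks, each patch interpolating linearly within $I$ to the background with step size tending to $0$), and the paper carries it out in no more detail than you do.
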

\begin{proof}
For every $(k,l)$; $k,l\geq 1$, fix
\[ \begin{array}{cccc}
  y^{k,l}_{0,0}&y^{k,l}_{0,1}& \cdots &y^{k,l}_{0,n_{l}}\\
  y^{k,l}_{1,0}&y^{k,l}_{1,1}& \cdots &y^{k,l}_{1,n_{l}}\\
\vdots&\vdots&\vdots&\vdots \\
  y^{k,l}_{m_{k},0}&y^{k,l}_{m_{k},1}& \cdots &y^{k,l}_{m_{k},n_{l}}\\
        \end{array}\]
in $I\times I$ with
$$y^{k,l}_{m_{k},0}=y^{k,l}_{0,n_{l}}=y^{k,l}_{m_{k},n_{l}}=a_{k+1,l+1},$$
$$y^{k,l}_{m_{k},2}=y^{k+1,l+1}_{0,0}=y^{k,l}_{m_{k},1}=b_{k+1,l+1},$$
$$y^{k+1,l+1}_{0,1}=y^{k+1,l+1}_{1,0}=y^{k+1,l+1}_{0,0}=c_{k+1,l+1},$$
and
$$y_{1,n_{l}}^{k,l}=y_{0,0}^{k+1,l+1}=y^{k,l}_{2,n_{l}}=d_{k+1,l+1}.$$
for $1\leq i\leq m_{k}$ and $1\leq j\leq n_{l}$ with
$$\abs{y^{k,l}_{i,j}-y^{k,l}_{i-1,j}}<\frac{1}{kl},$$
$$\abs{y^{k,l}_{i,j}-y^{k,l}_{i,j-1}}<\frac{1}{kl},$$
and
$$\abs{y^{k,l}_{i,j}-y^{k,l}_{i-1,j-1}}<\frac{1}{kl}.$$
Now the double sequence

\[ \begin{array}{cccccccccc}
a_{1,1}&b_{1,1}& y_{0,0}^{1,1} &\cdots& y_{0,n_{1}}^{1,1} &a_{1,1}&b_{1,1}&y^{1,1}_{0,0}&\cdots &y^{1,1}_{0,n_{1}}\\
d_{1,1}&c_{1,1}& y_{1,0}^{1,1} &\cdots& y_{1,n_{1}}^{1,1} &a_{1,1}&b_{1,1}&y^{1,1}_{1,0}&\cdots &y^{1,1}_{1,n_{1}}\\
y^{1,1}_{0,0}&y^{1,1}_{0,1}& y_{0,0}^{1,1} &\cdots& y_{0,n_{1}}^{1,1} &y^{1,1}_{0,0}&y^{1,1}_{0,1}&y^{1,1}_{0,0}&\cdots &y^{1,1}_{0,n_{1}}\\
\vdots&\vdots& \vdots &\vdots& \vdots &\vdots&\vdots&\vdots&\vdots &\vdots\\
y^{1,1}_{m_{1},0}&y^{1,1}_{m_{1},1}& y_{m_{1},0}^{1,1} &\cdots& y_{m_{1},n_{1}}^{1,1} &y^{1,1}_{m_{1},0}&y^{1,1}_{m_{1},1}&y^{1,1}_{m_{1},0}&\cdots &y^{1,1}_{m_{1},n_{1}}\\
a_{1,1}&b_{1,1}& y_{0,0}^{1,1} &\cdots& y_{0,n_{1}}^{1,1} &a_{2,2}&b_{2,2}&y^{2,2}_{0,0}&\cdots &y^{2}_{0,n_{1}}\\
d_{1,1}&c_{1,1}& y_{1,0}^{1,1} &\cdots& y_{1,n_{1}}^{1,1} &d_{2,2}&c_{2,2}&y^{2,2}_{1,0}&\cdots &y^{2,2}_{1,n_{2}}\\
y^{1,1}_{0,0}&y^{1,1}_{0,1}& y_{0,0}^{1,1} &\cdots& y_{0,n_{1}}^{1,1} &y^{2,2}_{0,0}&y^{2,2}_{0,1}&y^{2,2}_{0,0}&\cdots &y^{2,2}_{0,n_{2}}\\
\vdots&\vdots& \vdots &\vdots& \vdots &\vdots&\vdots&\vdots&\vdots &\vdots\\
y^{1,1}_{m_{1},0}& y^{1,1}_{m_{1},1}&y^{1,1}_{m_{1},0}&\cdots & y_{m_{1},1}&\vdots&\vdots&\vdots&\vdots &\vdots\\
\vdots&\vdots&\vdots&\vdots &\vdots&y^{2,2}_{m_{2},0}& y^{2,2}_{m_{2},1}&y^{2,2}_{m_{2},0}&\cdots& y^{2,2}_{m_{2},n_{2}}\\
         \end{array}.\]
is clearly a double sequence that has the desired property.
\end{proof}
\begin{theorem} \label{TheouniformcontinuitycoincideswithquasiCauchypreservence}
Suppose that $I\times I$ is any two dimensional interval. Then a two dimensional factorable real-valued function is uniformly continuous on $I\times I$ if and only if it is defined on $I\times I$ and preserves factorable double quasi-Cauchy sequences from $I\times I$.
\end{theorem}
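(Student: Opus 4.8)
The plan is to prove the two implications separately. The forward implication (uniform continuity $\Rightarrow$ preservation of factorable double quasi-Cauchy sequences) is a short modulus-of-continuity argument, while the reverse implication carries essentially all of the difficulty and is precisely what Theorem~\ref{Theodoubleseqofpairstendigtizeroimpliesdoubleforward} is designed to supply.

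First I would dispatch the forward direction. Assume $f$ is uniformly continuous on $I\times I$, fix $\epsilon>0$, and use uniform continuity to choose a single $\delta>0$ so that any two points of $I\times I$ at distance less than $\delta$ have $f$-values within $\epsilon$. Let $\mathbf{x}=\{x_{k,l}\}$ be a factorable double quasi-Cauchy sequence from $I\times I$. By Definition~\ref{DefquasiCauchydoublesequence} there is an $N$ beyond which each of the three forward increments $|x_{k,l}-x_{k+r,l+s}|$, for $(r,s)\in\{(0,1),(1,0),(1,1)\}$, is smaller than $\delta$ whenever $k,l>N$. Feeding these increments into the uniform modulus shows that the corresponding forward increments of the transformed sequence are all below $\epsilon$ for $k,l>N$, which is exactly the assertion that the image is quasi-Cauchy. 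The only point that matters here is that $\delta$ is chosen once and works everywhere on $I\times I$.

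For the converse I would argue by contraposition. Suppose $f$ is defined on $I\times I$ and preserves factorable double quasi-Cauchy sequences, but is \emph{not} uniformly continuous. Then there is an $\epsilon_0>0$ and, for each index $i$, a pair of points of $I\times I$ at distance less than $1/i$ whose $f$-values differ by at least $\epsilon_0$. I would record the coordinates of these near-coincident pairs as the data $a_{i,i},b_{i,i},c_{i,i},d_{i,i}$ so that $\lim_i |a_{i,i}-b_{i,i}|=\lim_i |a_{i,i}-c_{i,i}|=\lim_i |a_{i,i}-d_{i,i}|=0$, while the $f$-separation across each pair stays at least $\epsilon_0$. This is exactly the hypothesis of Theorem~\ref{Theodoubleseqofpairstendigtizeroimpliesdoubleforward}, which then produces a double quasi-Cauchy sequence $\{x_{i,j}\}$ in which each prescribed pair is realized as a forward-adjacent pair in one of the three directions $(0,1)$, $(1,0)$, $(1,1)$, the interpolating entries $y^{k,l}_{i,j}$ being inserted within $1/(kl)$ of their neighbors so as not to disturb the quasi-Cauchy property. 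Since $\{x_{i,j}\}$ is quasi-Cauchy and $f$ preserves quasi-Cauchy sequences, the transformed sequence must itself be quasi-Cauchy, forcing all of its forward increments to tend to $0$; but among those increments are precisely the $f$-jumps across the prescribed adjacent pairs, each of absolute value at least $\epsilon_0$. This contradiction shows $f$ must be uniformly continuous, which completes the equivalence.

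I expect the genuine obstacle to be the bookkeeping in the converse. One must verify that the near-coincident pairs arising from the failure of uniform continuity can indeed be cast as the pair-data $(a_{i,i},b_{i,i})$, $(a_{i,i},c_{i,i})$, $(a_{i,i},d_{i,i})$ of Theorem~\ref{Theodoubleseqofpairstendigtizeroimpliesdoubleforward}, and, more delicately, that in the resulting array each bad pair occupies a genuinely forward-adjacent position so that its $\epsilon_0$-sized $f$-jump survives as a forward increment of the image. At the same time one must confirm that the interpolation between prescribed entries (controlled by the $1/(kl)$ estimates) neither destroys factorability nor manufactures additional large forward increments, so that the only obstruction to the image being quasi-Cauchy is the one deliberately engineered. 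Once this alignment is secured, the contradiction is immediate.
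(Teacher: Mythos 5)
Your proposal follows essentially the same route as the paper: the forward direction is the routine single-$\delta$ modulus argument (which the paper simply declares ``clear''), and the converse negates uniform continuity, packages the resulting near-coincident pairs with $f$-separation at least $\epsilon_0$ as the data of Theorem~\ref{Theodoubleseqofpairstendigtizeroimpliesdoubleforward}, and uses the quasi-Cauchy sequence it produces to exhibit a non-quasi-Cauchy image. The bookkeeping concerns you flag at the end are real but are exactly the ones the paper's own proof absorbs into its appeal to that theorem, so there is no substantive divergence.
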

\begin{proof}
It is clear that two dimensional uniformly continuous functions preserve double quasi-Cauchy sequence.

Conversely, suppose that $f$ defined on $I\times I$ in not uniformly continuous. Then there exists an $\epsilon >0$ such that for any $\delta>0$ there exist  $(a,b),(\bar{a},\bar{b})\in I\times I$ with $\sqrt{(a-\bar{a})^{2} + (b-\bar{b})^{2}}<\delta$
but  $\abs{f(a,b)-f(\bar{a},b)}\geq \epsilon$,  $\abs{f(a,b)-f(a,\bar{b})}\geq \epsilon$, and  $\abs{f(a,b)-f(\bar{a},\bar{b})}\geq \epsilon$, respectively.  Then by Theorem \ref{Theodoubleseqofpairstendigtizeroimpliesdoubleforward} there exists a double factorable quasi-Cauchy sequence $\bf {x}$$=\{x_{k}x_{l}\}$ such that for any ordered pair $(i,j)$ with $i\geq 1$ and $j\geq 1$, there exist ordered pairs integers $(\bar{i},\bar{j})$ with $a_{i,j}=x_{\bar{i},\bar{j}}$ and $b_{i,j}=x_{\bar{i}+1,\bar{j}+1}$. This implies that
$$\abs{f(x_{\bar{i}},x_{\bar{j}})-   f(x_{\bar{i}+1},x_{\bar{j}})   }\geq \epsilon ,$$
$$\abs{f(x_{\bar{i}},x_{\bar{j}})-   f(x_{\bar{i}},x_{\bar{j}+1})   }\geq \epsilon ,$$
and
$$\abs{f(x_{\bar{i}},x_{\bar{j}})-   f(x_{\bar{i}+1},x_{\bar{j}+1})   }\geq \epsilon.$$
Thus $\{f(x_{i},x_{j})\}$ is not quasi-Cauchy.  Thus $f$ does not preserve double quasi-Cauchy sequence.
\end{proof}

\begin{theorem}
Suppose that $f$ is a factorable double function defined on the bounded double interval $I\times I$. Then $f$ is uniformly continuous on $I\times I$ if and only if the image under $f$ of any Cauchy double sequence in $I\times I$ is  quasi-Cauchy.
\end{theorem}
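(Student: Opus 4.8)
The plan is to prove the two implications separately. The forward implication is immediate from the machinery already in place, while the reverse implication is the substantive part; I would handle it by contraposition, combining the failure-of-uniform-continuity construction with a compactness (Bolzano--Weierstrass) argument that upgrades that failure into a genuinely \emph{Cauchy} double sequence.

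For the forward direction, suppose $f$ is uniformly continuous on $I\times I$ and let $\{x_{k,l}\}$ be any Cauchy double sequence in $I\times I$. As remarked after Definition \ref{DefquasiCauchydoublesequence}, every Cauchy double sequence is quasi-Cauchy, and since uniformly continuous two dimensional functions preserve double quasi-Cauchy sequences (the easy half of Theorem \ref{TheouniformcontinuitycoincideswithquasiCauchypreservence}), the image $\{f(x_{k,l})\}$ is quasi-Cauchy. One can also argue directly: given $\epsilon>0$, uniform continuity furnishes a $\delta>0$ so that points within Euclidean distance $\delta$ have images within $\epsilon$, and the Cauchy condition forces $x_{k,l}$ and each neighbour $x_{k+r,l+s}$, $r,s\in\{0,1\}$, to lie within $\delta$ for all large $k,l$; hence the maximum defining the quasi-Cauchy condition falls below $\epsilon$.

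For the reverse direction I would argue by contraposition: assume $f$ is \emph{not} uniformly continuous and produce a Cauchy double sequence whose image is not quasi-Cauchy. Exactly as in the proof of Theorem \ref{TheouniformcontinuitycoincideswithquasiCauchypreservence}, the failure of uniform continuity yields an $\epsilon_{0}>0$ and sequences of pairs $(a_{n},b_{n}),(\bar{a}_{n},\bar{b}_{n})\in I\times I$ with $\sqrt{(a_{n}-\bar{a}_{n})^{2}+(b_{n}-\bar{b}_{n})^{2}}\to 0$ while the three image differences stay $\geq\epsilon_{0}$. The new ingredient, not needed in Theorem \ref{TheouniformcontinuitycoincideswithquasiCauchypreservence}, is that $I$ is \emph{bounded}: by the Bolzano--Weierstrass theorem I may pass to a subsequence along which $a_{n}\to c$ and $b_{n}\to d$ for some $(c,d)\in\overline{I\times I}$, and since the pair-distances tend to $0$ the companion points $\bar{a}_{n},\bar{b}_{n}$ converge to the same limit.

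Finally I would interleave these convergent ``bad pairs'' into a single double array so that each offending pair occupies adjacent positions in the sense of the shifts $r,s\in\{0,1\}$ governing Definition \ref{DefquasiCauchydoublesequence}; this is precisely the packing device used in Theorem \ref{Theodoubleseqofpairstendigtizeroimpliesdoubleforward}. Because every entry of the resulting array converges to the common limit, all of its terms, not merely the neighbours, eventually cluster, so the sequence is \emph{Cauchy}; yet adjacent entries realize image gaps $\geq\epsilon_{0}$, so its image is not quasi-Cauchy. This contradiction establishes uniform continuity. The main obstacle I anticipate is the bookkeeping in this last step: one must thread the bad pairs through the two dimensional array so that the quasi-Cauchy-breaking gaps always fall on genuine $(r,s)$-neighbours while simultaneously keeping \emph{all} array entries close to the single limit, which is exactly what distinguishes this construction from the merely quasi-Cauchy construction of Theorem \ref{Theodoubleseqofpairstendigtizeroimpliesdoubleforward}.
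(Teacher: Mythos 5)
Your proposal is correct and follows essentially the same route as the paper: the forward direction via Theorem \ref{TheouniformcontinuitycoincideswithquasiCauchypreservence} together with the fact that Cauchy implies quasi-Cauchy, and the reverse direction by negating uniform continuity, invoking Bolzano--Weierstrass on the bounded interval to force the bad pairs toward a common limit, and interleaving them into a single convergent (hence Cauchy) double array whose image violates the quasi-Cauchy condition at adjacent positions. Your explicit remark that \emph{both} members of each bad pair converge to the same limit, so that the whole interleaved array is Cauchy and not merely quasi-Cauchy, is exactly the point the paper's construction relies on.
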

\begin{proof}
By Theorem \ref{TheouniformcontinuitycoincideswithquasiCauchypreservence} if $f$ is a factorable uniformly continuous on $I\times I$ then the image of any quasi-Cauchy double sequence in $I\times I$ is quasi-Cauchy. Therefore the image of any double Cauchy under factorable function is quasi-Cauchy. Now let us establish the converse, to that end, suppose that the image of every Cauchy double sequence is quasi-Cauchy but the factorable to be uniformly continuous.
 Then there exists an $\epsilon >0$ such that for any $\delta>0$ there exist  $(x,y),(\bar{x},\bar{y})\in I\times I$ with $\sqrt{(x-\bar{x})^{2} + (y-\bar{y})^{2}}<\delta$
but  $\abs{f(x,y)-f(\bar{x},y)}\geq \epsilon$,  $\abs{f(x,y)-f(x,\bar{y})}\geq \epsilon$, and  $\abs{f(x,y)-f(\bar{x},\bar{y})}\geq \epsilon$, respectively.
For each $(m,n);$ $m,n\geq 1$, for fix double sequence $(x_{m},y_{n})$ and $(\bar{x}_{m},\bar{y}_{n})$ in $I\times I$
with $\sqrt{(x_{m}-\bar{x}_{m})^{2} + (y_{n}-\bar{y}_{n})^{2}}<\frac{1}{mn}$
but  $$\abs{f(x_{m},y_{n})-f(\bar{x}_{m},y_{n})}\geq \epsilon,$$  $$\abs{f(x_{m},y_{n})-f(x_{m},\bar{y}_{n})}\geq \epsilon,$$ and  $$\abs{f(x_{m},y_{n})-f(\bar{x}_{m},\bar{y}_{n})}\geq \epsilon,$$ respectively.
Since $I\times I$ is bounded there exists a $P$-convergent subsequence by a simple extension of Bolzano-Weierstrass theorem, say $\{x_{k,l}\}$.
The following double sequence
\[ \begin{array}{ccccccc}
x_{1,1}&y_{1,2}& x_{1,3} &y_{1,4}& x_{1,5} &y_{1,6}&\cdots\\
    y_{2,1}&x_{2,2}& y_{2,3} &x_{2,4}& y_{2,5} &x_{2,6}&\cdots\\
   x_{3,1}&y_{3,2}&x_{3,3}&y_{3,4}&x_{3,5}&y_{3,6}&\cdots\\
  y_{4,1}&x_{4,2}& y_{4,3} &x_{4,4}& y_{4,5} &x_{4,6}&\cdots\\
   x_{5,1}&y_{5,2}&x_{5,3}&y_{5,4}&x_{5,5}&x_{5,6}&\cdots\\
   \vdots&\vdots& \vdots&\vdots& \vdots&\vdots&\ddots
         \end{array}\]
         is $P$-convergent. Thus Cauchy, however the image
 \[ \begin{array}{ccccccc}
f(x_{1},x_{1})&f(y_{1},y_{2})&f( x_{1},x_{3}) &f(y_{1},y_{4})& f(x_{1},x_{5}) &f(y_{1},y_{6})&\cdots\\
    f(y_{2},y_{1})&f(x_{2},x_{2})& f(y_{2},y_{3}) &f(x_{2},f_{4})& f(y_{2},y_{5}) &f(x_{2},x_{6})&\cdots\\
   f(x_{3},x_{1})&f(y_{3},y_{2})&f(x_{3},x_{3})&f(y_{3},y_{4})&f(x_{3},x_{5})&f(y_{3},y_{6})&\cdots\\
  f(y_{4},y_{1})&f(x_{4},x_{2})&f( y_{4},y_{3}) &f(x_{4},x_{4})& f(y_{4},y_{5}) &f(x_{4},x_{6})&\cdots\\
   f(x_{5},x_{1})&f(y_{5},x_{2})&f(x_{5},x_{3})&f(y_{5},y_{4})&f(x_{5},x_{5})&f(x_{5},x_{6})&\cdots\\
   \vdots&\vdots& \vdots&\vdots& \vdots&\vdots&\ddots
         \end{array}\]
is no quasi-Cauchy. Thus we have a contradiction.
\end{proof}

\section{{\bf General Metric Space}}
\begin{definition}
Suppose that $X\in {\bf R}^{2}$ is a set and $d: X\times X\rightarrow [0,\infty)$ is a function.
\begin{enumerate}
\item $d$ is called a pseudometric if it satisfies the following:
\begin{enumerate}
\item $d({\bf x},{\bf x})=0$ (i.e. $d((x_{1},y_{1}),(x_{1},y_{1}))=0$)
\item $d({\bf x},{\bf y})=d({\bf y},{\bf x})$
and
\item $d({\bf x},{\bf y})\leq d({\bf x},{\bf z})+d({\bf z},{\bf y})$ for all ${\bf x},{\bf y}$ and ${\bf z}\in X$

\end{enumerate}
\item $d$ is called a metric if $d$ also satisfies the following for all ${\bf x},{\bf y}\in X$
\begin{enumerate}
\item   $d({\bf x},{\bf y})<\infty$
\item $d({\bf x},{\bf y})=0$ implies that ${\bf x}={\bf y}$
\end{enumerate}
\end{enumerate}
\end{definition}

\begin{definition}
A metric space $(X,d)$ is called {\it non-incremental} if every quasi-Cauchy double sequence on $X$ is Cauchy
\end{definition}
\begin{definition}
An {\it ultrametric} space is a metric space $(X,d)$ which satisfies the following strengthening of the triangle inequality
$$d({\bf x},{\bf y})\leq \sup\left\{ d({\bf x},{\bf z}),d({\bf y},{\bf z})\right \}$$
for all ${\bf x},{\bf y},{\bf z}\in X$.
\end{definition}

\begin{theorem}
Ultarmetric spaces are non-incremental
\end{theorem}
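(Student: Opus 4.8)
The plan is to isolate the one genuinely ultrametric ingredient — that along any finite chain of points the endpoint distance is dominated by the largest single-step distance — and then to show that in a quasi-Cauchy double sequence any two late-indexed terms can be joined by such a chain of uniformly short steps. First I would record the metric reading of Definition \ref{DefquasiCauchydoublesequence}, replacing $|x_{k,l}-x_{k+r,l+s}|$ by $d(x_{k,l},x_{k+r,l+s})$: a quasi-Cauchy double sequence $\{x_{k,l}\}$ in $(X,d)$ satisfies, for every $\epsilon>0$, the existence of an $N$ with $d(x_{k,l},x_{k+1,l})<\epsilon$, $d(x_{k,l},x_{k,l+1})<\epsilon$, and $d(x_{k,l},x_{k+1,l+1})<\epsilon$ for all $k,l>N$.

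Next I would establish the iterated strong triangle inequality: for any finite list $z_{0},z_{1},\dots,z_{n}\in X$ one has $d(z_{0},z_{n})\leq\max_{0\leq i<n}d(z_{i},z_{i+1})$. This is an immediate induction on $n$ from the ultrametric axiom $d(\mathbf{x},\mathbf{y})\leq\sup\{d(\mathbf{x},\mathbf{z}),d(\mathbf{y},\mathbf{z})\}$, together with the symmetry of $d$.

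The core step is to connect two arbitrary late indices. Fix $\epsilon>0$ and the corresponding $N$, and take any $k,l,s,t>N$. I would join the index $(k,l)$ to $(s,t)$ by a lattice path built from unit horizontal and vertical moves that never leaves the quadrant $\{(i,j):i>N \text{ and } j>N\}$; for instance, move horizontally from $(k,l)$ to $(s,l)$ and then vertically from $(s,l)$ to $(s,t)$. Every index visited has both coordinates exceeding $N$, so each unit edge, read from its smaller-indexed endpoint and using symmetry of $d$, has length strictly below $\epsilon$ by the quasi-Cauchy condition. Applying the iterated inequality to the sequence values strung along this path then forces $d(x_{k,l},x_{s,t})$ to be at most the largest edge length, which is $<\epsilon$. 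Since $k,l,s,t>N$ were arbitrary, the sequence is Cauchy, and the space is non-incremental.

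The only place the one-dimensional heuristic threatens to break is that quasi-Cauchy supplies merely the three \emph{forward} differences, so a carelessly chosen path could be forced onto an index with a coordinate equal to $N$, where the estimate is unavailable. Routing the path entirely through the open quadrant $i,j>N$ and invoking the symmetry of $d$ is exactly what circumvents this, and it is also why the diagonal hops need not be used at all; confirming that the horizontal–then–vertical route stays in that quadrant is the one detail I would state explicitly rather than leave to the reader.
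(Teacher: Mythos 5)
Your proof is correct and follows essentially the same route as the paper's: both bound $d(x_{k,l},x_{s,t})$ for large indices by iterating the strong triangle inequality over a chain of adjacent terms, each step of which is below $\epsilon$ by the quasi-Cauchy hypothesis. Your version is the more carefully executed one --- the explicit finite lattice path kept inside the quadrant $i,j>N$ and the induction giving $d(z_{0},z_{n})\leq\max_{0\leq i<n}d(z_{i},z_{i+1})$ make precise what the paper leaves as an informal ``repeated application of the ultrametric inequality'' over a loosely specified array of one-step distances.
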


\begin{proof}
Suppose that $(X,d)$ is an ultrametric space. Suppose that $\{x_{i,j}\}$ is a factorable quasi-Cauchy double sequence in $X$.
Suppose $\epsilon >0$ is given then for fix $K>0$ such that $m,n>K$  implies
$$d((x_{m},y_{n}), (x_{m+r},y_{n+s}))<\epsilon$$ where $r$ and $s$ are $1$ and/ or $0$.  Suppose that
$m,n<K$ with $m\leq n$. Then repeated application of the ultrametric inequality yields
\begin{eqnarray*}
 d({\bf x},{\bf y})&= &\sup\left \{\begin{array}{cccc}
  d((x_{m},y_{n}),(x_{m+1},y_{n}))& d((x_{m},y_{n}),(x_{m},y_{n+1}))&\cdots\\
  0& d((x_{m},y_{n}),(x_{m+1},y_{n+1}))&\cdots\\
d((x_{m+1},y_{n}),(x_{m+2},y_{n}))& d((x_{m+1},y_{n}),(x_{m+1},y_{n+1}))& \cdots\\
  0& d((x_{m+1},y_{n}),(x_{m+2},y_{n+1}))&\cdots\\
 d((x_{m+2},y_{n}),(x_{m+3},y_{n}))& d((x_{m+2},y_{n}),(x_{m+2},y_{n+1}))&\cdots\\
  0& d((x_{m+2},y_{n}),(x_{m+3},y_{n+1}))&\ddots\\
  \vdots&\vdots&\vdots \\

        \end{array}\right \}\\
        &=& \sup\left \{
   \begin{array}{ccccc}
 \epsilon&\epsilon&\epsilon&\epsilon& \cdots\\
0&\epsilon&0&\epsilon&\cdots\\
 \epsilon&\epsilon&\epsilon&\epsilon&\cdots \\
 0&\epsilon&0&\epsilon&\cdots\\
 \epsilon&\epsilon&\epsilon&\epsilon&\cdots \\
\vdots&\vdots&\vdots&\vdots&\ddots. \\
        \end{array}\right \} \leq \epsilon.
\end{eqnarray*}
Thus the factorable double sequence $\{x_{k,l}\}$ is also Cauchy.
\end{proof}\section{{\bf Conclusion}}
It is easy to see that double Cauchy sequences are double quasi-Cauchy. The converse is easily seen to be false as in the single dimensional case (\cite{david-Coleman}, \cite{CakalliForwardcontinuity}). One should also note that the single dimensional subsequences of an ordinary Cauchy sequence are also Cauchy sequence.  However the subsequence of quasi-Cauchy sequence is not quasi-Cauchy. But not just that, the subsequence of an ordinary Cauchy sequence is quasi-Cauchy. Now for double sequence the picture is very similar. Every subsequence of a double Cauchy sequence is also double quasi-Cauchy. The converse is also easily seen to be false.  Similar to ordinary sequences the subsequence of a double quasi-Cauchy sequence is arbitrary to say the least. That brings us to the starting point of this analysis. We illustrate this fact through Theorem 3.3.
One should also note that are nice connections between double quasi-Cauchy sequences and uniform continuity of two-dimensional real-valued functions. This is illustrated through the following theorem.
Suppose that $I\times I$ is any two dimensional interval. Then a two dimensional factorable real-valued functions is uniformly continuous on $I\times I$ if and only if it is defined on $I\times I$ and preserves factorable double quasi-Cauchy sequences from $I\times I$. Extensions and variations of the above theorems was also presented.

\bibliographystyle{amsplain}

\end{document}